\documentclass[english]{amsart}
\usepackage{amsfonts}
\usepackage{amssymb}
\usepackage{babel}
\usepackage{amsthm}
\usepackage{latexsym,amsmath}
\usepackage{mathrsfs}
\usepackage[numbers]{natbib}
\usepackage{enumerate}
\usepackage[utf8x]{inputenc}
\usepackage{underscore}
\usepackage{hyperref}
\usepackage{float}
\newtheorem{theorem}{Theorem}[section]
\newtheorem{lemma}[theorem]{Lemma}
\newtheorem{corollary}[theorem]{Corollary}
\newtheorem{proposition}[theorem]{Proposition}
\newtheorem{definition}[theorem]{Definition}


\DeclareMathOperator{\diag}{diag}

\DeclareMathOperator{\rank}{rank}

\DeclareMathOperator{\dist}{dist}

\DeclareMathOperator{\tri}{tri}
\DeclareMathOperator{\supp}{supp}

\title[Two-by-two upper triangular matrices and Morrey's conjecture]{Two-by-two upper triangular matrices and Morrey's conjecture}
\author[T.L.J. Harris]{Terence L. J. Harris}
\address{Department of Mathematics, University of Illinois, Urbana, IL 61801, U.S.A.}
\email{terence2@illinois.edu}
\author[B. Kirchheim]{Bernd Kirchheim}
\address{Mathematisches Institut, Universität Leipzig, Augustusplatz 10,
04109 Leipzig, Germany}
\email{bernd.kirchheim@math.uni-leipzig.de}
\author[C.-C. Lin]{Chun-Chi Lin}
\address{Department of Mathematics, National Taiwan Normal University, Taipei, 116 Taiwan}
\email{chunlin@math.ntnu.edu.tw}
\thanks{{\it Maths Subject Classification (2000):} 49J45}
\numberwithin{equation}{section}
\begin{document}
\begin{abstract} It is shown that every homogeneous gradient Young measure supported on matrices of the form $\begin{pmatrix} a_{1,1} & \cdots & a_{1,n-1} & a_{1,n} \\ 0 & \cdots & 0 & a_{2,n} \end{pmatrix}$ is a laminate. This is used to prove the same result on the 3-dimensional nonlinear submanifold of $\mathbb{M}^{2 \times 2}$ defined by $\det X = 0$ and $X_{12}>0$. \end{abstract}
\keywords{Rank-one convexity \and Quasiconvexity}
\maketitle
\section{Introduction and preliminaries}\label{s:1} \begin{sloppypar}
\label{intro}  Let $\mathbb{M}^{m \times n}$ be the space of $m \times n$ matrices with real entries. A function \mbox{$f: \mathbb{M}^{m \times n} \to \mathbb{R}$} is rank-one convex if 
\[ f(\lambda X+ (1-\lambda)Y ) \leq \lambda f(X) + (1-\lambda )f(Y) \quad \text{ for all } \lambda \in [0,1], \]
for all $X,Y \in \mathbb{M}^{m \times n}$ with $\rank(X-Y) \leq 1$. A locally bounded Borel measurable function $f: \mathbb{M}^{m \times n} \to \mathbb{R}$  is quasiconvex if for every bounded domain $\Omega \subseteq \mathbb{R}^n$ and $X_0 \in \mathbb{M}^{m \times n}$,
\[ f(X_0) m\left(\Omega\right) \leq \int_{\Omega} f\left(X_0+ \nabla \phi(x) \right) \: dx, \]
for every $\phi \in C_0^{\infty}(\Omega, \mathbb{R}^m)$, where $\nabla \phi$ is the derivative of $\phi$. In this work we prove that rank-one convexity implies quasiconvexity on the $2 \times 2$ upper triangular matrices, and give some consequences and related results.

We just briefly mention the well known fact, that Morrey distilled the notion of quasiconvexity as the characterization of weak-$^*$ lower semicontinuity of variational integrals for vector valued problems, which ensures the general existence of minimizers. Rank-one convexity, on the other hand, is a much older notion and for smooth $f$ it is just equivalent to the Legendre-Hadamard condition, and hence a necessary condition for minimizers. So it comes as no surprise that quasiconvexity implies rank-one convexity of a general integrand $f$.

In 1952 Morrey conjectured that  the reverse implication fails, namely that rank-one convexity does not imply quasiconvexity \cite{morrey}. A counterexample for $m \geq 3$ and $n \geq 2$ was given by Šverák in \cite{sverak}, but the question remains open for $m =2$ and $n \geq 2$. Müller proved that rank-one convexity implies quasiconvexity on diagonal matrices (see \cite{muller}), which improved a result obtained earlier by Tartar \cite{tartar}. Other results toward a negative resolution of the conjecture on $\mathbb{M}^{2 \times 2}$ are given in \cite{chaudhuri,faraco,martin}. 

To explicitly formulate the problem on a subspace requires the dual notions of convexity for measures. Throughout, all probability measures are assumed to be Borel. A compactly supported probability measure $\mu$ on $\mathbb{M}^{m \times n}$ is called a laminate if 
\[ f\left( \overline{\mu} \right) \leq \int f \: d\mu \quad \text{ for all rank-one convex } f: \mathbb{M}^{m \times n} \to \mathbb{R}, \]
where $\overline{\mu} = \int X \: d\mu(X)$ is the barycentre of $\mu$. Similarly, $\mu$ is called a homogeneous gradient Young measure if the same inequality holds, but with rank-one convex replaced by quasiconvex (see \cite{kinderlehrer}). Polyconvexity can be defined analogously, but here we will only use the characterisation that a compactly supported probability measure $\mu$ on $\mathbb{M}^{2 \times 2}$ is polyconvex if $\int \det X \: d\mu(X) = \det(\overline{\mu})$. Let $\mathscr{M}_{pc}\left( U \right)$ be the set of polyconvex measures with compact support in a given set $U$, and define $\mathscr{M}_{qc}\left(U\right)$ and $\mathscr{M}_{rc}\left(U\right)$ similarly. 

 The question of whether rank-one convexity implies quasiconvexity is then equivalent to asking whether every homogeneous gradient Young measure is a laminate (see \cite{kinderlehrer}). In \cite{muller} Müller proved that every homogeneous gradient Young measure supported on the $2 \times 2$ diagonal matrices is a laminate. In \cite{lee} and \cite{muller} this was extended to the $n \times n$ diagonal matrices. The purpose of this work is to generalise the result for $2 \times 2$ diagonal matrices to the subspace
\[ \mathbb{M}^{2 \times n}_{\tri} := \left\{ \begin{pmatrix} a_{1,1} & \cdots & a_{1,n-1} & a_{1,n} \\ 0 & \cdots & 0 & a_{2,n} \end{pmatrix} \in \mathbb{M}^{2 \times n} \right\}. \]  When $n=2$, $\mathbb{M}^{2 \times 2}_{\tri}$ is the space of $2 \times 2$ upper triangular matrices. Up to linear isomorphisms preserving rank-one directions, the only other 3-dimensional subspace of $\mathbb{M}^{2 \times 2}$ is the symmetric matrices (see \cite[Corollary 6]{conti}). 

In Section \ref{s:4}, the result on upper-triangular matrices will be used to prove that rank-one convexity implies quasiconvexity on the 3-dimensional nonlinear manifold
\[ \{ X \in \mathbb{M}^{2 \times 2} : \det X = 0 \text{ and } X_{12}>0 \}. \]
This will be deduced as a corollary of a slightly more general result, which says that any Borel subset of $\{X_{12}>0\}$ has a ``dual set'' with the same convexity properties, and thus equality of rank-one convexity and quasiconvexity on one set implies equality of the two notions on the dual set. The method is similar to the one in \cite{chaudhuri}, where the same transformation is used to translate Müller's theorem on $2\times 2$ diagonal matrices to the set of symmetric matrices with $\det X = -1$ and $X_{11}>0$. 

\end{sloppypar}

\section{The linear space}\label{s:3}
In what follows, $P: \mathbb{M}^{2 \times n} \to \mathbb{M}^{2 \times n}$ will be the projection onto $\mathbb{M}^{2 \times n}_{\diag}$. Given a probability measure $\mu$ on $\mathbb{M}^{2 \times n}$, $P_{\#}\mu$ will denote the pushforward measure of $\mu$ by $P$, given by
\[ (P_{\#}\mu)(E) = \mu(P^{-1}(E)), \]
for any Borel set $E$. To show a homogeneous gradient Young measure $\mu$ supported in $\mathbb{M}^{2 \times n}_{\tri}$ is a laminate, the argument consists of two steps. The projection $P_{\#}\mu$ onto $\mathbb{M}^{2 \times n}_{\diag}$ is shown to be a gradient Young measure, and therefore a laminate by Müller's Theorem. It is then shown that since $P_{\#}\mu$ is a laminate, $\mu$ is also a laminate. The main reason that this method works is that $P$ is rank preserving, in the sense that $\rank X \leq 1$ if and only if $\rank(P(X)) \leq 1$, for any $X \in \mathbb{M}^{2 \times n}_{\tri}$. 

The proof requires a few extra definitions, which give a more constructive characterisation of laminates (see also \cite{pedregal}). 
\begin{definition} A set  $\{(t_1, Y_1), \ldots,(t_l,Y_l) \} \subseteq (0,1] \times \mathbb{M}^{m \times n}$ with $\sum_{i=1}^l t_i =1$ satisfies the $H_l$ condition if:
\begin{enumerate}[i)]
\item $l =2$ and $\rank(Y_1-Y_2) \leq 1$, or;
\item $l >2$ and after a permutation of the indices, $\rank(Y_1-Y_2) \leq 1$ and the set 
\[ \left\{ \left( t_1 + t_2, \frac{ t_1 Y_1 + t_2 Y_2}{t_1+t_2} \right), \left( t_3, Y_3 \right), \ldots ,\left(t_l, Y_l \right) \right\}, \]satisfies the $H_{l-1}$ condition. \end{enumerate}
\end{definition}
A convex combination of Dirac measures $\mu = \sum_{i=1}^N \lambda_i \delta_{X_i}$ is called a prelaminate of order (at most) $N$ if the set $\{(\lambda_1, X_1), \ldots, (\lambda_N, X_N)\}$ satisfies the $H_N$ condition. This definition essentially says that the class of prelaminates is the smallest class of probability measures that contains the Dirac masses and is closed under rank-one splitting of its atoms. We remark that in the definition, the representation of $\mu$ as a convex combination of Dirac measures is not necessarily unique and may have repeated $X_i$'s, which will require some care later. 

The following theorem is a special case of Theorem 4.12 in \cite{kirchheim}, see also Theorem 3.1 in \cite{mullersverak}. 
\begin{theorem} \label{seq} Let $\mu$ be a laminate with support inside a compact set $K \subseteq \mathbb{M}^{2 \times n}_{\diag}$, and let $U \subseteq \mathbb{M}^{2 \times n}_{\diag}$ be any relatively open neighbourhood of $K^{co}$. There exists a sequence $\mu^{(n)}$ of prelaminates supported in $U$, with common barycentre, such that $\mu^{(n)} \stackrel{*}{\rightharpoonup} \mu$. 
\end{theorem}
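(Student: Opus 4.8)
The plan is to realise $\mu$ as a weak-$*$ limit of prelaminates lying in a weak-$*$-closed convex set that automatically encodes the barycentre and support constraints, and to locate $\mu$ in that set by Hahn--Banach separation. Write $X_0 = \overline{\mu}$ and fix a bounded open $U'$ with $K^{co} \subseteq U' \subseteq \overline{U'} \subseteq U$. First I would let $\mathcal{P}$ be the set of prelaminates $\nu$ with $\overline{\nu} = X_0$ and $\supp \nu \subseteq \overline{U'}$, viewed inside the probability measures on the compact metric space $\overline{U'}$, and check that $\mathcal{P}$ is convex: given $\nu_1,\nu_2 \in \mathcal{P}$, the tree that first splits $\delta_{X_0}$ into two rank-zero copies of $X_0$ and then follows the splitting trees of $\nu_1$ and $\nu_2$ exhibits $t\nu_1 + (1-t)\nu_2$ as a prelaminate with barycentre $X_0$ and support in $\overline{U'}$. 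Its weak-$*$ closure $\mathcal{C}$ is then weak-$*$ compact, convex and metrizable, hence equal to the sequential weak-$*$ closure of $\mathcal{P}$, and every member of $\mathcal{C}$ still has barycentre $X_0$ and support in $\overline{U'} \subseteq U$ (both are preserved under weak-$*$ limits with uniformly bounded supports). So it suffices to show $\mu \in \mathcal{C}$; this yields a sequence $\mu^{(n)}$ of prelaminates, automatically with common barycentre $X_0$ and with $\supp \mu^{(n)} \subseteq U$.

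To show $\mu \in \mathcal{C}$ I would argue by contradiction. If $\mu \notin \mathcal{C}$, Hahn--Banach separation in the space of signed Borel measures on $\overline{U'}$ with the weak-$*$ topology (dual $C(\overline{U'})$) produces $f \in C(\overline{U'})$ and $\alpha \in \mathbb{R}$ with
\[ \int f \, d\mu \;<\; \alpha \;\le\; \int f \, d\nu \qquad \text{for all } \nu \in \mathcal{P}, \]
so $\inf_{\nu \in \mathcal{P}} \int f \, d\nu \ge \alpha$. Next I would absorb the support constraint into the integrand by extending $f$ to $\widehat{f} : \mathbb{M}^{2\times n}_{\diag} \to (-\infty,+\infty]$ with $\widehat{f} \equiv +\infty$ off $\overline{U'}$: then $\widehat{f}$ is lower semicontinuous and bounded below, and a prelaminate $\nu$ with $\overline{\nu} = X_0$ has $\int \widehat{f}\, d\nu < +\infty$ exactly when $\nu \in \mathcal{P}$. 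The crux would then be to produce, for a prescribed $\varepsilon > 0$, a genuine real-valued rank-one convex $g : \mathbb{M}^{2\times n}_{\diag} \to \mathbb{R}$ with $g \le \widehat{f}$ and $g(X_0) > \alpha - \varepsilon$. Given such $g$, the argument closes: $\mu$ being a laminate, $\int g \, d\mu \ge g(\overline{\mu}) = g(X_0) > \alpha - \varepsilon$, while $\supp \mu \subseteq K \subseteq \overline{U'}$, where $g \le \widehat{f} = f$, forces $\int g \, d\mu \le \int f \, d\mu$; hence $\int f \, d\mu > \alpha - \varepsilon$ for all $\varepsilon$, i.e. $\int f \, d\mu \ge \alpha$, contradicting the separation.

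To build $g$ I would not use the envelope of $\widehat{f}$ directly (it may be $+\infty$ far from $\overline{U'}$, hence inadmissible), but rather the inf-convolution regularizations $\widehat{f}_N(X) = \inf_{Y \in \overline{U'}} \bigl( f(Y) + N|X-Y| \bigr)$, which are finite, $N$-Lipschitz, bounded below by $\min_{\overline{U'}} f$, and increase pointwise to $\widehat{f}$. For each finite $N$, the rank-one convex envelope $\widehat{f}_N^{\,rc}$ is a genuine real-valued rank-one convex function, and by the classical envelope theory (iteration of the one-step convexification $(Rh)(X) = \inf\{\lambda h(A) + (1-\lambda)h(B) : \lambda A + (1-\lambda)B = X,\ \rank(A-B) \le 1\}$ together with Jensen's inequality along splitting trees) it equals $\inf\{\int \widehat{f}_N \, d\nu : \nu \text{ a prelaminate},\ \overline{\nu} = X_0\}$. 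Finally I would let $N \to \infty$: since $\widehat{f}_N(X) \ge \min_{\overline{U'}} f + N\dist(X,\overline{U'})$, prelaminates with non-negligible mass far from $\overline{U'}$ become arbitrarily expensive, so $\widehat{f}_N^{\,rc}(X_0)$ should increase to $\inf_{\nu \in \mathcal{P}} \int f \, d\nu \ge \alpha$, and $g := \widehat{f}_N^{\,rc}$ for large $N$ is as required.

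The hard part will be this last limit --- equivalently, showing that the support constraint $\supp \nu \subseteq \overline{U'}$ is not felt by the relevant rank-one convex envelope at the interior point $X_0$, so that the regularized envelopes converge to the ``localized'' value and not to a smaller global one; this is where the genuine content of the theorem sits. Beyond that, a little bookkeeping is needed because a prelaminate's representation as a convex combination of Dirac masses need not be unique and may involve repeated atoms, as already remarked above.
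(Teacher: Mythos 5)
The paper does not prove this statement at all --- it is quoted verbatim from Kirchheim's habilitation thesis (Theorem 4.12) and Müller--Šverák (Theorem 3.1) --- so the only comparison available is against the known proofs of those results. Your framework (weak-$*$ closed convex set of constrained prelaminates, Hahn--Banach separation, duality with rank-one convex functions) is indeed the standard skeleton of those proofs, and the preliminary steps are fine: convexity of $\mathcal{P}$ via the rank-zero split of $\delta_{X_0}$, metrizability, persistence of barycentre and support under weak-$*$ limits, the reduction to producing a rank-one convex $g$ with $g\le f$ on $\supp\mu$ and $g(X_0)>\alpha-\varepsilon$, and the representation $h^{rc}(X_0)=\inf\{\int h\,d\nu\}$ over prelaminates for finite continuous $h$.

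However, the step you flag as ``the hard part'' is a genuine gap, and the penalization you propose does not close it. From $\widehat{f}_N\ge \min_{\overline{U'}}f+N\dist(\cdot,\overline{U'})$ and the uniform bound $\widehat{f}_N^{\,rc}(X_0)\le f(X_0)$ you only get $\int\dist(\cdot,\overline{U'})\,d\nu_N\le C/N$ for near-optimal prelaminates $\nu_N$: this controls the first moment of the distance, hence forces any weak-$*$ limit $\nu_\infty$ of the $\nu_N$ to be supported in $\overline{U'}$, but it does not put the $\nu_N$ themselves (nor any surgery of them) into $\mathcal{P}$. The restriction of a prelaminate to a set is not a prelaminate, and truncating a splitting tree at $\partial U'$ changes the weights and the barycentres of all ancestors. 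So the best you can extract is $\liminf_N\widehat{f}_N^{\,rc}(X_0)\ge\int f\,d\nu_\infty$ where $\nu_\infty$ is a laminate supported in $\overline{U'}$ with barycentre $X_0$ that is a weak-$*$ limit of prelaminates with \emph{unconstrained} support; to conclude $\int f\,d\nu_\infty\ge\alpha$ you would need $\nu_\infty\in\mathcal{C}$, i.e. exactly the approximation-with-localized-support statement you set out to prove. The argument is therefore circular at its decisive point: the content of the theorem is precisely that splittings can be kept inside a neighbourhood of $K^{co}$, and this requires a separate geometric argument (this is what occupies the proofs in the cited references). A secondary issue you should also resolve: the paper's definition of laminate tests $\mu$ against rank-one convex functions on all of $\mathbb{M}^{2\times n}$, so your $g$ must be rank-one convex on the full space, not just on $\mathbb{M}^{2\times n}_{\diag}$; taking the envelope on the full space is the safe choice, but then the localization problem includes confining the splittings to the subspace as well.
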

The following version of Müller's result on the space 
\[  \mathbb{M}^{2 \times n}_{\diag} := \left\{ \begin{pmatrix} a_{1,1} & \cdots & a_{1,n-1} & 0\\ 0 & \cdots & 0 & a_{2,n} \end{pmatrix} \in \mathbb{M}^{2 \times n} \right\}, \]
will also be needed; a proof is given in Appendix \ref{appendix}. 
\begin{theorem} \label{muller} Every homogeneous gradient Young measure supported in $\mathbb{M}^{2 \times n}_{\diag}$ is a laminate. \end{theorem}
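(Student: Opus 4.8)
The plan is to reduce Theorem~\ref{muller} on $\mathbb{M}^{2\times n}_{\diag}$ to Müller's original result on $2\times 2$ diagonal matrices, by exploiting the special structure of $\mathbb{M}^{2\times n}_{\diag}$: the second row has only one nonzero entry $a_{2,n}$, and the rank-one condition between two such matrices $X,Y$ is insensitive to the first $n-1$ columns of the first row whenever $X_{2,n}\ne Y_{2,n}$. Concretely, write a matrix in $\mathbb{M}^{2\times n}_{\diag}$ as a pair $(a,b)$ where $a=(a_{1,1},\dots,a_{1,n-1},a_{1,n})\in\mathbb{R}^n$ is the first row and $b=a_{2,n}\in\mathbb{R}$. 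Two matrices $(a,b)$ and $(a',b')$ have rank-one difference if and only if either $b=b'$ (in which case any $a-a'$ works, since the difference has a zero second row), or $b\ne b'$ and $a-a'$ is parallel to $e_n$. This is a much weaker — more permissive — rank-one structure than on honest $n\times n$ diagonal matrices, which is why the result should follow from, rather than require a reworking of, Müller's theorem.

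First I would set up the correspondence with $2\times 2$ diagonal matrices. Let $\pi:\mathbb{M}^{2\times n}_{\diag}\to\mathbb{M}^{2\times 2}_{\diag}$ send $(a_{1,1},\dots,a_{1,n},a_{2,n})\mapsto \diag(a_{1,n},a_{2,n})$, i.e. forget the first $n-1$ entries of the first row; and let $\rho:\mathbb{M}^{2\times n}_{\diag}\to\mathbb{R}^{n-1}$ record those forgotten entries $(a_{1,1},\dots,a_{1,n-1})$. The key observation is that both $\pi$ and $\rho$ are rank-one–nonincreasing on $\mathbb{M}^{2\times n}_{\diag}$: if $\rank(X-Y)\le 1$ then $\rank(\pi(X)-\pi(Y))\le 1$, and $\rho(X)-\rho(Y)$, viewed inside $\mathbb{M}^{2\times n}$ by padding with zeros, also has rank $\le 1$ — indeed $\rho(X)-\rho(Y)$ is a multiple of some fixed rank-one direction whenever $X_{2,n}\ne Y_{2,n}$, and is arbitrary but still realized by a rank-$\le 1$ matrix (second row zero) when $X_{2,n}=Y_{2,n}$. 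Thus, given a homogeneous gradient Young measure $\mu$ on $\mathbb{M}^{2\times n}_{\diag}$, both $\pi_\#\mu$ and the $\rho$-marginals are again homogeneous gradient Young measures (pullback of a quasiconvex function along a linear rank-one–nonincreasing map is quasiconvex), so $\pi_\#\mu$ is a laminate on $\mathbb{M}^{2\times 2}_{\diag}$ by Müller's theorem.

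The heart of the argument is then a splicing construction analogous to Section~\ref{s:3}'s treatment of $P_\#\mu$: having laminated in the $(a_{1,n},a_{2,n})$-plane, I would show that one can freely laminate in the remaining $a_{1,1},\dots,a_{1,n-1}$ directions because each $e_j$ with $j<n$, together with zero second row, is a rank-one direction in $\mathbb{M}^{2\times n}_{\diag}$. The cleanest route: use Theorem~\ref{seq} to approximate $\pi_\#\mu$ by prelaminates $\nu^{(k)}$ in $\mathbb{M}^{2\times 2}_{\diag}$; lift each atom $\diag(\alpha,\beta)$ of $\nu^{(k)}$ back to $\mathbb{M}^{2\times n}_{\diag}$ by attaching the conditional distribution of $(a_{1,1},\dots,a_{1,n-1})$ given $\{a_{1,n},a_{2,n}\}$ under $\mu$ — this conditional law can itself be approximated by prelaminates since it lives on a fixed affine copy of $\mathbb{R}^{n-1}$ inside $\mathbb{M}^{2\times n}_{\diag}$ on which \emph{every} direction is rank-one (so every compactly supported measure there is trivially a laminate). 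Concatenating a prelaminate of $\nu^{(k)}$ with these fibrewise prelaminates, and checking the $H_l$ condition is preserved at each splitting because the first split (in the $e_n$ direction, after matching $a_{2,n}$) and the subsequent fibre splits (in $e_1,\dots,e_{n-1}$ directions) are all rank-one, produces prelaminates on $\mathbb{M}^{2\times n}_{\diag}$ converging weak-$*$ to $\mu$ with the correct barycentre; hence $\mu$ is a laminate.

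The main obstacle I anticipate is the bookkeeping in the concatenation step — specifically, verifying the $H_l$ condition for the spliced measure and controlling barycentres so that the fibrewise lift has the same barycentre as $\mu$ rather than merely the same $\pi$-marginal. One must be careful that disintegrating $\mu$ over the $\{a_{1,n},a_{2,n}\}$-fibres is legitimate (the fibres are measurable, $\mu$ is compactly supported, so a regular conditional distribution exists) and that the resulting family of fibre measures depends measurably enough on the base point to be approximated uniformly; then the weak-$*$ convergence $\mu^{(k)}\stackrel{*}{\rightharpoonup}\mu$ follows by testing against continuous functions and using dominated convergence over the compact support. A secondary subtlety, exactly as flagged in the remark after the definition of prelaminate, is that repeated atoms (e.g. distinct fibre atoms projecting to the same $\pi$-image, or a degenerate fibre) must be handled without collapsing the $H_l$ structure; this is routine but requires keeping the representation as an ordered list of weighted Diracs rather than as a measure during the induction. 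None of these steps needs a new idea beyond those already used for the triangular case, which is why the proof is deferred to Appendix~\ref{appendix}.
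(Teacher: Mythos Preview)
There is a genuine gap, beginning with a misreading of the space: in $\mathbb{M}^{2\times n}_{\diag}$ the entry $a_{1,n}$ is identically zero, so your map $\pi(X)=\diag(a_{1,n},a_{2,n})$ collapses everything onto the line $\{\diag(0,t)\}$ and never invokes Müller's $2\times 2$ theorem nontrivially; your rank-one description (``$b\ne b'$ forces $a-a'\parallel e_n$'') is actually the one for $\mathbb{M}^{2\times n}_{\tri}$, not $\mathbb{M}^{2\times n}_{\diag}$. More fundamentally, the splicing step cannot work because, unlike the projection $P$ of Section~\ref{s:3} (which the paper stresses is \emph{rank-preserving}, ``the main reason that this method works''), any linear projection from $\mathbb{M}^{2\times n}_{\diag}$ onto a two-dimensional diagonal space is only rank-\emph{nonincreasing}. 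Your concatenation uses nothing about $\mu$ beyond ``the base marginal is a laminate and each fibre conditional is a laminate,'' and that implication is false: take $\mu=\tfrac{1}{2}\delta_A+\tfrac{1}{2}\delta_B$ in $\mathbb{M}^{2\times n}_{\diag}$ with $A$ having first row $e_1$ and $A_{2,n}=0$, and $B$ having zero first row and $B_{2,n}=1$. The $a_{2,n}$-marginal is $\tfrac{1}{2}\delta_0+\tfrac{1}{2}\delta_1$ and both fibres are Dirac masses, yet $\mu$ is not a laminate, since the rank-one affine minor $X\mapsto X_{1,1}X_{2,n}$ has integral $0$ but value $\tfrac{1}{4}$ at $\overline\mu$. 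In your scheme, after the rank-one split in $a_{2,n}$ you sit at points whose first-row part is the global average $\tfrac{1}{2}e_1$, not the fibre barycentres $e_1$ and $0$, and no further rank-one splitting can repair this mismatch while producing the target measure. The ``bookkeeping'' you defer is therefore not routine; it is precisely where the argument fails.

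The paper's proof does not reduce to the $2\times2$ case at all. It reruns Müller's method directly in $n$ variables: a generating sequence for $\mu$ is truncated (via Zhang's theorem) so that its gradients lie uniformly near $\mathbb{M}^{2\times n}_{\diag}$; the Riesz-transform/Haar-projection interpolation estimate~\eqref{interpolatory} from \cite{lee,muller} then forces the relevant entries to be well approximated in $L^2$ by the correct Haar blocks; and Lemma~\ref{jensen}, a Jensen-type inequality for finite Haar expansions that exploits block separate convexity on $\mathbb{M}^{2\times n}_{\diag}$, closes the argument.
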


\begin{sloppypar}
\begin{theorem} \label{uppertri} Every homogeneous gradient Young measure supported in $\mathbb{M}^{2 \times n}_{\tri}$ is a laminate. \end{theorem}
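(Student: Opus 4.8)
The plan is to follow the two-step scheme described above. Let $\mu$ be a homogeneous gradient Young measure with $\supp\mu\subseteq\mathbb{M}^{2\times n}_{\tri}$ and barycentre $\overline{\mu}=A$, and let $P\colon\mathbb{M}^{2\times n}\to\mathbb{M}^{2\times n}_{\diag}$ be the projection, which on $\mathbb{M}^{2\times n}_{\tri}$ just annihilates the $(1,n)$ entry. The first and main step is to show that $P_{\#}\mu$ is a homogeneous gradient Young measure supported in $\mathbb{M}^{2\times n}_{\diag}$, so that it is a laminate by Theorem~\ref{muller}. Pick $u_k=(v_k,w_k)\in W^{1,\infty}(Q;\mathbb{R}^2)$, $Q=(0,1)^n$, with $u_k-Ax\in W_0^{1,\infty}$, $\nabla u_k$ uniformly bounded and generating $\mu$ (possible since $\mu$ is compactly supported); then $\partial_j w_k\to 0$ in measure for $j<n$ because $\supp\mu\subseteq\mathbb{M}^{2\times n}_{\tri}$, and since $P$ is continuous, $P\nabla u_k$ generates $P_{\#}\mu$. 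However $P\nabla u_k$ is not itself a gradient: its $(1,n)$ entry has been forced to $0$ while $\partial_n v_k\not\equiv0$. To replace it by a bounded gradient field with the same Young measure I would freeze out the $x_n$-dependence of $v_k$: partition $(0,1)$ in the $x_n$-variable into $N$ thin slabs, replace $v_k$ on the $m$-th slab by its frozen slice $x'\mapsto v_k(x',m/N)$, and interpolate across the interfaces. Consecutive frozen slices differ by a function of $L^\infty$-size $O(1/N)$ with bounded gradient, so each interface can be resolved over a thin layer; the $\partial_n$ of the modified first component is then large only on those layers, and iterating the construction inside the layers (a staircase lamination) drives to zero the measure of the set where it is nonzero, keeping all gradients bounded. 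Letting $N=N_k\to\infty$ together with the number of iteration levels, and diagonalising, one obtains maps with uniformly bounded gradients, concentrating on $\mathbb{M}^{2\times n}_{\diag}$, and generating $P_{\#}\mu$; hence $P_{\#}\mu$ is a homogeneous gradient Young measure, and a laminate.

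For the second step, disintegrate $\mu=\int\mu_y\,d(P_{\#}\mu)(y)$ with $\mu_y$ a probability measure on the fibre $P^{-1}(y)=\{\,y+tE_{1,n}:t\in\mathbb{R}\,\}$, where $E_{1,n}$ is the matrix unit with a $1$ in position $(1,n)$. Since $E_{1,n}$ has rank one, every fibre is a rank-one line, so by one-dimensional convexity each $\mu_y$ is a laminate; write $\overline{\mu_y}=y+c(y)E_{1,n}$ with $c$ bounded measurable. Approximating each $\mu_y$ weak-$*$ by a finitely supported prelaminate on its fibre with the same barycentre, and using that the laminates with support in a fixed compact set form a weak-$*$ closed set, it suffices to show that each measure $\nu=\sum_j\int\beta_j(y)\,\delta_{y+\tau_j(y)E_{1,n}}\,d(P_{\#}\mu)(y)$ — finitely many $j$, $\sum_j\beta_j\equiv1$, $\sum_j\beta_j(y)\tau_j(y)=c(y)$ — is a laminate; by Lusin's theorem one may assume, after a further small approximation, that the $\beta_j$ and $\tau_j$ are continuous. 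By Theorem~\ref{seq}, write $P_{\#}\mu$ as a weak-$*$ limit of prelaminates $\nu^{(k)}=\sum_i\lambda_i\delta_{Z_i}$ supported in a fixed bounded set. Rank-preservation of $P$ is now decisive: the rank-one directions of $\mathbb{M}^{2\times n}_{\diag}$ lie in $\spa\{E_{1,1},\dots,E_{1,n-1}\}$ or in $\spa\{E_{2,n}\}$, and adjoining $E_{1,n}$ to either of these subspaces still yields rank-one matrices, so any rank-one connection in $\mathbb{M}^{2\times n}_{\diag}$ remains one in $\mathbb{M}^{2\times n}_{\tri}$ after adding an arbitrary multiple of $E_{1,n}$. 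Hence the $H_N$-tree of $\nu^{(k)}$ lifts verbatim to a prelaminate in $\mathbb{M}^{2\times n}_{\tri}$ with prescribed $E_{1,n}$-coordinates at its leaves; lifting each $Z_i$ to $Z_i+\bigl(\sum_j\beta_j(Z_i)\tau_j(Z_i)\bigr)E_{1,n}$ and then splitting it along its rank-one fibre into $\sum_j\beta_j(Z_i)\delta_{Z_i+\tau_j(Z_i)E_{1,n}}$ gives prelaminates converging weak-$*$ to $\nu$ as $k\to\infty$. Undoing the two approximations shows $\mu$ is a laminate.

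I expect the first step to be the main obstacle: producing a \emph{bounded}-gradient generating sequence for $P_{\#}\mu$ forces the iterated staircase lamination and a careful management of scales. The second step is comparatively soft — rank-preservation makes the $H_N$-tree lift automatically and the fibres of $P$ are rank-one lines, so the only remaining delicacy is the measurable matching of the conditionals $\mu_y$ with the atoms $Z_i$, handled by Lusin's theorem.
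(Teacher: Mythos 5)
Your second step is essentially the paper's argument (disintegration over the fibres of $P$, which are rank-one lines in the $e_1\otimes e_n$ direction; Lusin; approximation of $P_{\#}\mu$ by prelaminates via Theorem~\ref{seq}; lifting the $H_N$-tree using the fact that $P$ preserves rank on $\mathbb{M}^{2\times n}_{\tri}$), and it is correct as sketched. The problem is your first step, which is where you yourself locate the difficulty, and the construction you describe cannot work.

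Here is the concrete obstruction. Your frozen-and-interpolated map $\tilde v_k$ satisfies $\|\tilde v_k-v_k\|_\infty\le CL/N_k\to 0$, where $L$ is the uniform Lipschitz bound; in particular it inherits, up to $o(1)$, the boundary values of $v_k$, whose first component is $\sum_i A_{1i}x_i$ with $A=\overline{\mu}$. If in addition $\|\nabla\tilde v_k\|_\infty\le M$ and $\partial_n\tilde v_k\to 0$ in measure (which is what is needed for the gradients of $(\tilde v_k,w_k)$ to concentrate on $\mathbb{M}^{2\times n}_{\diag}$), then by bounded convergence $\int_Q\partial_n\tilde v_k\,dx\to 0$, while $\int_Q\partial_n\tilde v_k\,dx=\int_{Q'}\bigl(\tilde v_k(x',1)-\tilde v_k(x',0)\bigr)\,dx'\to A_{1,n}$. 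So the construction is impossible whenever $\overline{\mu}_{1,n}\neq 0$, which is the generic case. Quantitatively: any Lipschitz interpolation across a layer must carry the full increment $v_k(x',(m+1)/N)-v_k(x',m/N)$, so the set where $\partial_n\tilde v_k\neq 0$ has measure at least $\frac{1}{M}\sum_m\int_{Q'}|v_k(x',(m+1)/N)-v_k(x',m/N)|\,dx'\ge|A_{1,n}|/M$, no matter how many times you iterate inside the layers; ``thin layers'' and ``bounded gradients'' are mutually exclusive here. (A correct construction would have to change the boundary data to $P(A)x$, i.e.\ remove the macroscopic drift in $x_n$, and even then one must preserve the joint law of the first row with $\partial_n w_k$, which freezing $v_k$ alone does not do since $w_k$ is evaluated at the unfrozen point.) The paper avoids all of this by working at the level of measures rather than generating sequences: for $A_k=\diag(1,k)$ and $B_k=\diag(1,\dots,1,1/k)$ the map $T(X)=A_kXB_k$ sends gradient Young measures to gradient Young measures because $f\circ T$ is quasiconvex whenever $f$ is (a change of variables $\psi(x)=A_k\phi(B_kx)$), it acts on $\mathbb{M}^{2\times n}_{\tri}$ by multiplying only the $(1,n)$ entry by $1/k$, and it converges to $P$ uniformly on compact sets; since compactly supported homogeneous gradient Young measures on a fixed compact set are weak-$*$ closed, $P_{\#}\mu$ is a gradient Young measure. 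You should replace your Step~1 by an argument of this type.
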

\begin{proof} The proof is divided into three parts.
\begin{enumerate}[(i)]
\item If $\mu$ is a homogeneous gradient Young measure supported in $\mathbb{M}^{2 \times n}_{\tri}$, then $P_{\#}\mu$ is a homogeneous gradient Young measure. 
\item If $\mu$ is a convex combination of Dirac measures supported in $\mathbb{M}^{2 \times n}_{\tri}$ and $P_{\#}\mu$ is a prelaminate, then $\mu$ is a prelaminate. 
\item If $\mu$ is a probability measure with compact support in $\mathbb{M}^{2 \times n}_{\tri}$ and $P_{\#}\mu$ is a laminate, then $\mu$ is a laminate.
\end{enumerate}
To prove (i), it is first shown that if $T: \mathbb{M}^{2 \times n} \to \mathbb{M}^{2 \times n}$ is defined by $T(X)=AXB$ where $A \in \mathbb{M}^{2 \times 2}$ and $B \in \mathbb{M}^{n \times n}$ is invertible, then $T_{\#} \nu$ is a homogeneous gradient Young measure whenever $\nu$ is a homogeneous gradient Young measure.

To show this, let $f: \mathbb{M}^{2 \times n} \to \mathbb{R}$ be a quasiconvex function and let \mbox{$g= f \circ T$}. Let $\Omega \subseteq \mathbb{R}^n$ be a nonempty bounded domain and let \mbox{$\phi \in C_0^{\infty}(\Omega, \mathbb{R}^2)$}. Define \mbox{$\psi \in C_0^{\infty}(B^{-1}(\Omega), \mathbb{R}^2)$} by $\psi(x) = A \phi(Bx)$, and let $X_0 \in \mathbb{M}^{2 \times n}$. Then \mbox{$\nabla \psi(x) = A \nabla \phi(Bx) B$}, and hence
\begin{align*} &\int_{\Omega} g\left(X_0 + \nabla \phi(x) \right) \: dx \\
= &\left|\det B\right| \int_{B^{-1}(\Omega)} f\left(AX_0B + \nabla \psi(y)\right) \: dy \\
\geq &\left|\det B\right|m\left(B^{-1}(\Omega) \right) f(AX_0B) \quad \text{ since $f$ is quasiconvex,} \\
= &m(\Omega) g(X_0). \end{align*}
This shows that $g$ is quasiconvex, which implies that
\[ \int f \: d(T_{\#} \nu) = \int g \: d\nu \geq g(\overline{\nu}) = f\left(T\left(\overline{\nu}\right) \right) =  f\left(\overline{ T_{\#} \nu} \right), \]
and therefore $T_{\#}\nu$ is a homogeneous gradient Young measure. 

Now let 
\[ A_k =  \begin{pmatrix} 1 & 0 \\ 0 & k \end{pmatrix}, \quad B_k = \begin{pmatrix}
  1 &&&  \\
& \ddots && \\
&& 1 & \\
&&& 1/k\end{pmatrix}, \]
and define $P^{(k)}: \mathbb{M}^{2 \times n} \to \mathbb{M}^{2 \times n}$ by $X \mapsto A_kXB_k$, so that for $X \in \mathbb{M}^{2 \times 2}_{\tri}$,
\[ P^{(k)}(X) =  \begin{pmatrix} x_{1,1} & \cdots & x_{1,n-1} & \frac{x_{1,n}}{k} \\
0 & \cdots & 0 & x_{2,n} \end{pmatrix}. \] 
Let $\mu$ be a (compactly supported) homogeneous gradient Young measure on $\mathbb{M}^{2 \times n}_{\tri}$. Then $P^{(k)} \to P$ uniformly on compact subsets of $\mathbb{M}^{2 \times n}_{\tri}$, and hence for any continuous function \mbox{$f: \mathbb{M}^{2 \times n} \to \mathbb{R}$},
\[ \lim_{k \to \infty} \int f \: d(P^{(k)}_{\#} \mu)   = \int f \: d(P_{\#} \mu). \]
Therefore  $P^{(k)}_{\#} \mu \stackrel{*}{\rightharpoonup} P_{\#} \mu$, and since the measures $P^{(k)}_{\#} \mu$ live on a common compact set, this shows that $P_{\#} \mu$ is a homogeneous gradient Young measure. 

For (ii), let $\mu$ be a finite convex combination of Dirac measures, such that $P_{\#} \mu$ is a prelaminate of order $N$. By induction on $N$ it may be assumed that a convex combination $\nu$ of Dirac measures on $\mathbb{M}^{2 \times n}_{\tri}$ is a prelaminate whenever $P_{\#} \nu$ is a prelaminate of order strictly less than $N$. Without loss of generality it may also be assumed that the projection $P$ is injective on the support of $\mu$, since $\mu$ can be obtained from its average along the $e_1 \otimes e_n$ direction by rank-one splitting in the $e_1 \otimes e_n$ direction.  

Therefore, by assumption $\mu$ can be written as $\mu = \sum_{i=1}^N \lambda_i \delta_{X_i}$ where the set $\{ ( \lambda_i,P(X_i),) : 1 \leq i \leq N\}$ satisfies the $H_N$ condition. This means that after a permutation of indices, \mbox{$\rank(P(X_1)-P(X_2)) \leq 1$} and the set 
\[ \left\{ \left( \lambda_1 + \lambda_2,P\left( \frac{ \lambda_1 X_1+\lambda_2X_2}{\lambda_1+\lambda_2} \right) \right),\left(\lambda_3,  P(X_3) \right), \ldots \left( \lambda_N,P(X_N) \right) \right\} \] satisfies the $H_{N-1}$ condition. Hence $\rank(X_1-X_2) \leq 1$, and by the inductive assumption the measure
\[ \nu = (\lambda_1 + \lambda_2) \delta_{ \frac{\lambda_1X_1}{\lambda_1+\lambda_2}+ \frac{\lambda_2X_2}{\lambda_1+\lambda_2}}+ \sum_{i=3}^N \lambda_i \delta_{X_i}, \]
is a prelaminate. The measure $\mu$ is a prelaminate since it can be obtained from $\nu$ by rank-one splitting. This proves (ii).

 Suppose $\mu$ satisfies the assumptions of (iii). Let $N$ be such that \mbox{$\supp \mu \subseteq B(0,N)$}, and identify $\mathbb{M}^{2 \times n}_{\tri}$ with $\mathbb{M}^{2 \times n}_{\diag} \times \mathbb{R}$. By disintegration (see Theorem 2.28 in \cite{ambrosio}) there exist probability measures $\lambda_X$ on $[-N,N]$ such that
\[ \int f\: d\mu = \int \int f(X,t) \: d\lambda_X(t) \: d(P_{\#}\mu)(X), \]
for all continuous $f: \mathbb{M}^{2 \times n}_{\tri} \to \mathbb{R}$, the inner integral being Borel measurable with respect to $X$. Therefore applying Lusin's Theorem (see \cite[Theorem 1.45]{ambrosio}) to the bounded function $X \mapsto \overline{\lambda_X}$ gives a uniformly bounded sequence of continuous functions $g^{(k)}(X) = (X,\psi^{(k)}(X))$ from $B(0,N) \cap \mathbb{M}^{2 \times n}_{\diag}$ into $B(0,N)\cap \mathbb{M}^{2 \times n}_{\tri}$ approximating $g(X) =(X, \overline{\lambda_X} )$, in the sense that 
\[ \lim_{k \to \infty} (P_{\# }\mu) \left\{ X \in \mathbb{M}^{2 \times n}_{\diag} : g^{(k)}(X) \neq g(X) \right\} =0. \] It follows that for any continuous function $f: \mathbb{M}^{2 \times n}_{\tri} \to \mathbb{R}$, 
\begin{equation} \label{lusin} \lim_{k \to \infty} \int (f \circ g^{(k)}) \: d(P_{\#}\mu) =  \int (f \circ g) \: d(P_{\#}\mu). \end{equation}
Using Theorem \ref{seq} let $\nu^{(j)}$ be a sequence of prelaminates supported in a common compact subset of $\mathbb{M}^{2 \times n}_{\diag}$ such that $\nu^{(j)} \stackrel{*}{\rightharpoonup} P_{\#}\mu$. If $f$ is rank-one convex, then
\begin{align*} \int f\: d\mu &= \int \int f(X,t) \: d\lambda_X(t) \: d(P_{\#}\mu)(X) \\
&\geq \int (f \circ g) \: d P_{\#}\mu \quad \text{ by convexity in the $e_1 \otimes e_n$ direction,}\\
&= \lim_{k \to \infty} \lim_{j \to \infty} \int f \: d g^{(k)}_{\#}\nu^{(j)} \quad \text{ by \eqref{lusin},} \\
&\geq \lim_{k \to \infty} \lim_{j \to \infty} f\left( \overline{g^{(k)}_{\#}\nu^{(j)}}\right) \quad \text{ by (ii),}\\
&= f(\overline{\mu}) \quad \text{ by \eqref{lusin}.}  \end{align*}
This shows that $\mu$ is a laminate, and therefore proves (iii).

Putting these together, if $\mu$ is a homogeneous gradient Young measure supported in $\mathbb{M}^{2 \times n}_{\tri}$, then $P_{\#}\mu$ is a homogeneous gradient Young measure by (i), and therefore a laminate by Theorem \ref{muller}. The fact that $P_{\#}\mu$ is a laminate then implies that $\mu$ is a laminate by (iii), and this proves the theorem. \end{proof} \end{sloppypar}
\section{The 3-dimensional nonlinear space}\label{s:4}
Let 
\[\mathbb{M}^{2 \times 2}_*  = \{ X \in \mathbb{M}^{2 \times 2} : \det X = 0 \}, \quad \mathbb{M}^{2 \times 2}_+ = \{ X \in \mathbb{M}^{2 \times 2} : X_{12} >0 \}. \]
The result of the previous section will be used to prove that every homogeneous gradient Young measure on \mbox{$\mathbb{M}^{2 \times 2}_* \cap \mathbb{M}^{2 \times 2}_+$} is a laminate. This is done via a change of variables that is often described as a partial Legendre transformation and was used in a similar context e.g. in 
\cite{chaudhuri,evans}. We will describe it now in detail, simplifying a few arguments from \cite{chaudhuri} along the way. 
\begin{sloppypar}
Given an open set $\Omega \subseteq \mathbb{R}^2$ and a smooth function $u=(u_1,u_2): \Omega \to \mathbb{R}^2$, consider the functions
\[ T_1(x) = (x_1, u_1(x)), \quad T_2(x)= (x_2,u_2(x)). \]
If $T_1$ is invertible with nonvanishing Jacobian, define the function \mbox{$v: T_1(\Omega) \to \mathbb{R}^2$} by $v \circ T_1 = T_2$, that is
\begin{equation} \label{uvdual} v(x_1, u_1(x)) = (x_2, u_2(x)) \quad \text{ for all } x \in \Omega. \end{equation}
This implies that 
\[ u(x_1, v_1(x)) = (x_2,v_2(x)) \quad \text{ for all } x \in T_1(\Omega). \]
which can be checked by substituting $x=T_1(y)$. If $u$ has gradient $X \in \mathbb{M}^{2 \times 2}$ at some point $x \in \Omega$, then by the chain rule $v$ has gradient 
\[ \Psi(X) = \frac{1}{X_{12}} \begin{pmatrix} -X_{11} & 1 \\ - \det X & X_{22} \end{pmatrix}, \]
at the point $T_1(x)$, where $\Psi$ is defined on $\mathbb{M}^{2 \times 2}_+$. If $S_1(x) = (x_1, v_1(x))$, then $S_1(T_1(x))=x$ by \eqref{uvdual}, and therefore $\Psi$ is a self-inverse mapping of $\mathbb{M}^{2 \times 2}_+$ onto itself. The main motivation for introducing this transformation is the easily verified fact that
\[ \Psi\left(\mathbb{M}^{2 \times 2}_* \cap \mathbb{M}^{2 \times 2}_+\right)= \mathbb{M}^{2 \times 2}_{\tri} \cap \mathbb{M}^{2 \times 2}_+. \]
\end{sloppypar}
Given a function $h:  \mathbb{M}^{2 \times 2}_+ \to \mathbb{R}$, define the dual function $\widetilde{h}:  \mathbb{M}^{2 \times 2}_+ \to \mathbb{R}$ by
\[ \widetilde{h}(X) = X_{12} h( \Psi(X)). \]
The term $X_{12}$ corresponds to the determinant of $\nabla T_1$, which will later simplify the change of variables in integration.

Given a probability measure $\mu$ on $\mathbb{M}^{2 \times 2}_+$, define the dual probability measure $\widetilde{\mu}$ on $\mathbb{M}^{2 \times 2}_+$ by 
\[ \int f \: d\widetilde{\mu}(X)  = \frac{1}{\overline{\mu}_{12}} \int \widetilde{f} \: d\mu. \]
for all Borel measurable $f: \mathbb{M}^{2 \times 2}_+ \to [0, \infty]$. The basic properties are summarised in the following proposition. 
\begin{proposition} \label{properties} Let $h : \mathbb{M}^{2 \times 2}_+ \to \mathbb{R}$ be a function, and let $\mu$ be a probability measure with compact support in $\mathbb{M}^{2 \times 2}_+$. Then:
\begin{enumerate}[(i)]
\item $\Psi = \Psi^{-1}$;
\item $\widetilde{\widetilde{h}} = h$;
\item $\widetilde{\widetilde{\mu}}=\mu$ and $\supp \widetilde{\mu} = \Psi(\supp \mu)$;
\item $\overline{ \widetilde{\mu} } = \Psi\left( \overline{\mu }\right)$ if and only if $\mu$ is polyconvex.
\end{enumerate}
\end{proposition}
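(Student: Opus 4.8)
The plan is to prove all four items by direct computation; the only input beyond algebra is that $\Psi$ is a homeomorphism of $\mathbb{M}^{2 \times 2}_+$ onto itself. For (i), which was already indicated in the paragraph introducing $\Psi$, set $Y = \Psi(X)$: then $Y_{12} = 1/X_{12}$ and $\det Y = -X_{21}/X_{12}$, and feeding these two identities into the formula for $\Psi(Y)$ returns $X$, so $\Psi \circ \Psi = \id$ on $\mathbb{M}^{2 \times 2}_+$. Since $\Psi$ is a rational map with denominator $X_{12}$ it is continuous on $\{X_{12}>0\}$ and maps that set into itself (as $(\Psi(X))_{12} = 1/X_{12} > 0$), so being a continuous involution it is a homeomorphism. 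For (ii) I would apply $\widetilde{h}(Z) = Z_{12}\, h(\Psi(Z))$ twice and use (i):
\[ \widetilde{\widetilde{h}}(X) = X_{12}\, \widetilde{h}(\Psi(X)) = X_{12}\,(\Psi(X))_{12}\, h(\Psi(\Psi(X))) = X_{12} \cdot \tfrac{1}{X_{12}} \cdot h(X) = h(X). \]

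Before (iii) and (iv) I would record the barycentre of $\widetilde{\mu}$. Because $\supp\mu$ is compact inside $\mathbb{M}^{2 \times 2}_+$, the coordinate $X \mapsto X_{12}$ lies between two positive constants on it, so $\widetilde{\mu}$ is a finite measure and the defining relation $\int f \, d\widetilde{\mu} = \overline{\mu}_{12}^{-1}\int \widetilde{f}\, d\mu$ extends from nonnegative Borel $f$ to all bounded Borel $f$ (split into positive and negative parts; note $\widetilde{f^{\pm}} = (\widetilde f)^{\pm}$ since $X_{12}>0$). Taking $f$ to be each coordinate function and using the explicit form of $\Psi$ — in three of the four cases the factor $X_{12}^{-1}$ in the entries of $\Psi(X)$ cancels the weight $X_{12}$ — gives
\[ \overline{\widetilde{\mu}} = \frac{1}{\overline{\mu}_{12}} \begin{pmatrix} -\overline{\mu}_{11} & 1 \\ -\int \det X \, d\mu(X) & \overline{\mu}_{22} \end{pmatrix}. \]

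For (iii): taking $f \equiv 1$ in the defining relation gives total mass $\overline{\mu}_{12}^{-1}\int X_{12}\,d\mu = 1$, so $\widetilde\mu$ is a probability measure; and $\widetilde{\mu} = \overline{\mu}_{12}^{-1}\,\Psi_{\#}(X_{12}\mu)$ is the image under the homeomorphism $\Psi$ of $\mu$ reweighted by the density $X_{12}/\overline{\mu}_{12}$, which is bounded between positive constants on $\supp\mu$, hence $\supp\widetilde{\mu} = \Psi(\supp\mu)$. For $\widetilde{\widetilde{\mu}} = \mu$, read off $\overline{\widetilde{\mu}}_{12} = \overline{\mu}_{12}^{-1}$ from the displayed formula and apply the defining relation twice together with (ii):
\[ \int f \, d\widetilde{\widetilde{\mu}} = \frac{1}{\overline{\widetilde{\mu}}_{12}}\int \widetilde{f}\, d\widetilde{\mu} = \overline{\mu}_{12} \cdot \frac{1}{\overline{\mu}_{12}} \int \widetilde{\widetilde{f}}\, d\mu = \int f\, d\mu. \]

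For (iv), compare the displayed $\overline{\widetilde\mu}$ with $\Psi(\overline\mu) = \overline{\mu}_{12}^{-1}\begin{pmatrix} -\overline{\mu}_{11} & 1 \\ -\det(\overline{\mu}) & \overline{\mu}_{22} \end{pmatrix}$: the $(1,1)$, $(1,2)$ and $(2,2)$ entries coincide automatically, so $\overline{\widetilde{\mu}} = \Psi(\overline{\mu})$ holds exactly when the $(2,1)$ entries agree, that is, when $\int \det X\, d\mu(X) = \det(\overline{\mu})$, which is the definition of polyconvexity of $\mu$. I do not expect a genuine obstacle here beyond careful bookkeeping; the one point needing care is the extension of the defining relation for $\widetilde\mu$ to signed integrands, which is what legitimises the barycentre computation and uses the compactness of $\supp\mu$ inside $\mathbb{M}^{2\times 2}_+$.
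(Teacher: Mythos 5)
Your proposal is correct and follows essentially the same route as the paper: compute $\overline{\widetilde{\mu}}$ explicitly, read off $\overline{\widetilde{\mu}}_{12}=1/\overline{\mu}_{12}$ to get $\widetilde{\widetilde{\mu}}=\mu$ from the double application of the defining relation together with (ii), and compare the $(2,1)$ entries for (iv). The only cosmetic differences are that you verify $\Psi\circ\Psi=\id$ by direct algebra rather than via the already-established identity $S_1\circ T_1=\id$, and you spell out the (routine) extension of the defining relation to signed integrands; neither changes the substance.
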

\begin{sloppypar}
\begin{proof} Part (i) has been shown, and (ii) follows from (i). For (iii), the fact that $\supp \widetilde{\mu} = \Psi( \supp \mu)$ follows directly from the definition of $\widetilde{\mu}$ and the support. The barycentre of $\widetilde{\mu}$ is
\begin{equation} \label{barytilde} \overline{ \widetilde{\mu} } =  \int X \: d \widetilde{\mu} = \frac{1}{\overline{\mu}_{12} } \int X_{12} \Psi(X) \: d\mu(X) = \frac{1}{\overline{\mu}_{12} } \begin{pmatrix} -\overline{\mu}_{11} & 1 \\
- \int \det X \: d\mu(X) & \overline{\mu}_{22} \end{pmatrix}. \end{equation}
This shows that $\overline{\widetilde{\mu}}_{12} = \frac{1}{\overline{\mu}_{12}}$, and thus 
\[ \int f \: d\widetilde{ \widetilde{\mu }} = \overline{\mu}_{12} \int \widetilde{f} \: d\widetilde{ \mu } = \int \widetilde{\widetilde{f}} \: d\mu = \int f \: d\mu \quad \text{ by (ii).} \] 
Hence $\widetilde{\widetilde{\mu}}=\mu$, and therefore (iii) holds. For (iv), the measure $\mu$ is polyconvex if and only if $\int \det X \: d\mu(X) = \det( \overline{\mu})$, so combining this with \eqref{barytilde} finishes the proof. \end{proof}
Recall that $\mathscr{M}_{pc}\left(\mathbb{M}^{2 \times 2}_+\right)$ is the set of polyconvex measures with compact support in $\mathbb{M}^{2 \times 2}_+$, and $\mathscr{M}_{qc}\left(\mathbb{M}^{2 \times 2}_+\right)$, $\mathscr{M}_{rc}\left(\mathbb{M}^{2 \times 2}_+\right)$ are defined similarly. 
\end{sloppypar}
\begin{theorem} \label{dualmeasure} The function $\mu \mapsto \widetilde{\mu}$ maps $\mathscr{M}_{\Box}\left(\mathbb{M}^{2 \times 2}_+\right)$ bijectively onto itself, where $\Box \in \{pc,qc,rc\}$.
\end{theorem}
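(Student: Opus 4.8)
The plan is to use that $\mu\mapsto\widetilde{\mu}$ is an involution. By Proposition~\ref{properties}(iii) one has $\widetilde{\widetilde{\mu}}=\mu$ for every compactly supported probability measure on $\mathbb{M}^{2\times 2}_+$, so the map is its own inverse, and it suffices to show it carries $\mathscr{M}_{\Box}(\mathbb{M}^{2\times 2}_+)$ \emph{into} itself for each $\Box\in\{pc,qc,rc\}$. Two preliminary remarks organise the three cases. First, since $\det$ and $-\det$ are both rank-one affine, and in fact quasiaffine, testing the defining Jensen inequality against them shows $\mathscr{M}_{rc}(\mathbb{M}^{2\times 2}_+)\cup\mathscr{M}_{qc}(\mathbb{M}^{2\times 2}_+)\subseteq\mathscr{M}_{pc}(\mathbb{M}^{2\times 2}_+)$; hence for $\mu$ in \emph{any} of the three classes Proposition~\ref{properties}(iv) gives $\overline{\widetilde{\mu}}=\Psi(\overline{\mu})$, which makes the barycentre bookkeeping uniform. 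Second, a short computation from the explicit formula for $\Psi$ yields the identities $X_{12}\det\Psi(X)=-X_{21}$ and
\[ \det\bigl(\Psi(X)-\Psi(Y)\bigr)=-\frac{\det(X-Y)}{X_{12}Y_{12}}\qquad (X,Y\in\mathbb{M}^{2\times 2}_+), \]
the second of which says that $\Psi$ is rank-one preserving on $\mathbb{M}^{2\times 2}_+$, i.e. $\rank(\Psi(X)-\Psi(Y))\le 1\iff\rank(X-Y)\le 1$.

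The polyconvex case is then immediate: using $X_{12}\det\Psi(X)=-X_{21}$ one gets $\int\det\,d\widetilde{\mu}=-\overline{\mu}_{21}/\overline{\mu}_{12}$, while \eqref{barytilde} gives $\det\overline{\widetilde{\mu}}=\overline{\mu}_{12}^{-2}\bigl(\int\det\,d\mu-\overline{\mu}_{11}\overline{\mu}_{22}\bigr)$, and equating these and clearing denominators shows $\widetilde{\mu}$ is polyconvex iff $\int\det\,d\mu=\det\overline{\mu}$, i.e. iff $\mu$ is. For the rank-one convex case I would first check that $\mu\mapsto\widetilde{\mu}$ sends prelaminates supported in $\mathbb{M}^{2\times 2}_+$ to prelaminates supported in $\mathbb{M}^{2\times 2}_+$: one computes $\widetilde{\delta_X}=\delta_{\Psi(X)}$, and for $\nu=\sum_i\lambda_i\delta_{X_i}$ that $\widetilde{\nu}=\sum_i\widetilde{\lambda}_i\delta_{\Psi(X_i)}$ with $\widetilde{\lambda}_i$ proportional to $\lambda_i(X_i)_{12}$; when $X_1,X_2$ are rank-one connected, the rank-preservation identity together with the fact that $X\mapsto X_{12}$ is affine and $\det$ is affine along $[X_1,X_2]$ shows that merging the first two atoms of $\widetilde{\nu}$ reproduces exactly $\widetilde{\nu'}$, where $\nu'$ is $\nu$ with its first two atoms merged; induction on the order of the prelaminate (all intermediate barycentres staying in the convex hull of $\supp\nu$, which lies in $\mathbb{M}^{2\times 2}_+$) then gives the claim. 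Now if $\mu\in\mathscr{M}_{rc}(\mathbb{M}^{2\times 2}_+)$, i.e. $\mu$ is a laminate with $\supp\mu\subseteq K\Subset\mathbb{M}^{2\times 2}_+$, choose a bounded open convex $V$ with $\overline{V}\subseteq\mathbb{M}^{2\times 2}_+$ containing the convex hull of $K$ and apply the general form of Theorem~\ref{seq} (Theorem~4.12 of \cite{kirchheim}) to get prelaminates $\nu^{(j)}$ supported in $V$, with common barycentre $\overline{\mu}$, with $\nu^{(j)}\stackrel{*}{\rightharpoonup}\mu$. Then the $\widetilde{\nu^{(j)}}$ are prelaminates supported in the fixed compact set $\Psi(\overline{V})\subseteq\mathbb{M}^{2\times 2}_+$, have common barycentre $\Psi(\overline{\mu})$, and---since the normalising factor $\overline{\nu^{(j)}}_{12}=\overline{\mu}_{12}$ is constant---satisfy $\widetilde{\nu^{(j)}}\stackrel{*}{\rightharpoonup}\widetilde{\mu}$; a weak-$*$ limit of prelaminates with a common barycentre and uniformly compact support is a laminate, so $\widetilde{\mu}\in\mathscr{M}_{rc}(\mathbb{M}^{2\times 2}_+)$.

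For the quasiconvex case I would run the change of variables $u\leftrightarrow v$ of \eqref{uvdual} on a generating sequence. Represent $\mu\in\mathscr{M}_{qc}(\mathbb{M}^{2\times 2}_+)$ by smooth maps $u_j$ with $u_j-\overline{\mu}\,x\in C_0^{\infty}(\Omega,\mathbb{R}^2)$ generating $\mu$. The crucial point is to arrange that $\nabla u_j$ takes values in a \emph{fixed} compact subset of $\mathbb{M}^{2\times 2}_+$: since $X\mapsto X_{12}$ is a null Lagrangian, the quasiconvex hull of $K=\supp\mu$ is contained in $\{X_{12}\ge\delta\}$ for some $\delta>0$, and one wants a generating sequence with $(\nabla u_j)_{12}$ bounded below by essentially $\delta$, obtained by a Zhang-type truncation as in \cite{chaudhuri}. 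Granting this, $T_{1,j}(x)=(x_1,u_{1,j}(x))$ is a diffeomorphism of $\Omega$ onto $\Omega_j:=T_{1,j}(\Omega)$ with Jacobian $(\nabla u_j)_{12}>0$ and affine boundary values, the dual map $v_j$ satisfies $\nabla v_j=\Psi(\nabla u_j)\circ T_{1,j}^{-1}$ and $v_j-\Psi(\overline{\mu})\,y\in C_0^{\infty}(\Omega_j,\mathbb{R}^2)$, and for any quasiconvex $f:\mathbb{M}^{2\times 2}\to\mathbb{R}$ the substitution $y=T_{1,j}(x)$ turns the quasiconvexity inequality for $f$ at $\Psi(\overline{\mu})$ on $\Omega_j$ into
\[ f\bigl(\Psi(\overline{\mu})\bigr)\int_{\Omega}(\nabla u_j)_{12}\,dx\ \le\ \int_{\Omega}\widetilde{f}(\nabla u_j)\,dx. \]
Dividing by $m(\Omega)$ and letting $j\to\infty$, using that $\nabla u_j$ generates $\mu$ and that $\overline{\widetilde{\mu}}=\Psi(\overline{\mu})$, gives $f(\overline{\widetilde{\mu}})\le\int f\,d\widetilde{\mu}$, hence $\widetilde{\mu}\in\mathscr{M}_{qc}(\mathbb{M}^{2\times 2}_+)$. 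Combined with the involution property, the three cases prove the theorem.

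The step I expect to be the main obstacle is precisely the one flagged in the quasiconvex case: producing a generating sequence for $\mu$ whose gradients remain in a fixed compact subset of $\mathbb{M}^{2\times 2}_+$, so that $T_{1,j}$ is a genuine global diffeomorphism rather than merely a local one. This cannot be bypassed by restricting a globally quasiconvex test function, because $\widetilde{f}$ blows up as $X_{12}\to 0$; equivalently, one could recast all three cases through the function-level duality ``$\widetilde{h}$ is $\Box$-convex on the open set $\mathbb{M}^{2\times 2}_+$ whenever $h$ is'' together with a localisation reducing $\mathscr{M}_{\Box}(\mathbb{M}^{2\times 2}_+)$ to test functions defined only on $\mathbb{M}^{2\times 2}_+$, but that localisation---routine for $\Box\in\{pc,rc\}$---is for $\Box=qc$ exactly the non-local difficulty that the truncation argument above is designed to resolve.
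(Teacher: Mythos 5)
Your proposal is correct in outline, and the algebra checks out: the identities $X_{12}\det\Psi(X)=-X_{21}$ and $\det(\Psi(X)-\Psi(Y))=-\det(X-Y)/(X_{12}Y_{12})$, the polyconvexity computation against \eqref{barytilde}, and the fact that the dual of a prelaminate $\sum_i\lambda_i\delta_{X_i}$ is again a prelaminate with atoms $\Psi(X_i)$ and weights proportional to $\lambda_i(X_i)_{12}$ (using that $X\mapsto X_{12}\Psi(X)$ is affine along rank-one segments) are all correct. The involution reduction and the $pc$ case coincide with the paper's. For $rc$ you diverge: the paper first settles the $qc$ case at the level of functions and then observes that $f$ is rank-one convex iff it satisfies Jensen's inequality on two-point homogeneous gradient Young measures, a class preserved by $\mu\mapsto\widetilde{\mu}$, so that $\widetilde{f}$ is rank-one convex whenever $f$ is. Your prelaminate-approximation argument via Theorem \ref{seq} is a valid, self-contained alternative that does not route through the $qc$ case, at the cost of redoing the splitting bookkeeping that the paper gets for free.

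The $qc$ case is where your proposal has a genuine gap, and you have named it yourself: you need a generating sequence whose gradients stay in a fixed compact subset of $\mathbb{M}^{2\times 2}_+$, so that each $T_{1,j}$ is a global diffeomorphism, and you only assert that such a sequence exists ``by a Zhang-type truncation.'' This can be filled --- \cite[Corollary 3]{muller2} yields $\dist(\nabla u_j,(\supp\mu)^{co})\to 0$ in $L^{\infty}$, and $(\supp\mu)^{co}$ is a compact subset of the convex open set $\mathbb{M}^{2\times 2}_+$, though one must still check that the cutoff and diagonalisation restoring affine boundary values do not destroy the lower bound on $(\nabla u_j)_{12}$ (compare the paper's own Appendix around \eqref{zhang}) --- but as written the step is missing. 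The paper sidesteps the issue entirely: it proves the function-level statement that $\widetilde{h}$ is quasiconvex on $\mathbb{M}^{2\times 2}_+$ whenever $h$ is, testing only against $\phi$ with $A+\nabla\phi$ ranging in $\mathbb{M}^{2\times 2}_+$ (for which injectivity of $T_1$ is immediate from $\partial_2\psi_1\geq c>0$), and then invokes Theorem 1.6 of \cite{kirchheim}: a compactly supported probability measure is a homogeneous gradient Young measure if and only if Jensen's inequality holds for all quasiconvex functions defined on some open neighbourhood of $(\supp\mu)^{co}$. That citable localisation theorem is exactly the ``non-local difficulty'' you flag in your closing paragraph; knowing it exists collapses the step you identify as the main obstacle.
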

\begin{proof} If $\mu \in \mathscr{M}_{pc}\left(\mathbb{M}^{2 \times 2}_+\right)$ then by Proposition \ref{properties}
\[ \overline{ \widetilde{ \widetilde{\mu} } } = \overline{\mu} = \Psi\left( \Psi\left(\overline{\mu} \right) \right) = \Psi\left( \overline{ \widetilde{\mu} } \right), \]
and therefore $\widetilde{\mu} \in \mathscr{M}_{pc}\left(\mathbb{M}^{2 \times 2}_+\right)$. This proves the result in the case $\Box = pc$. 

If $\Box = qc$, the result will be proven for functions first, and then for measures. Let $h$ be quasiconvex on $\mathbb{M}^{2 \times 2}_+$, let $\Omega \subseteq \mathbb{R}^2$ be a nonempty bounded domain, and let $A \in \mathbb{M}^{2 \times 2}$, $\phi \in C_0^{\infty}(\Omega, \mathbb{R}^2)$ be such that the range of $A +\nabla \phi$ is contained in $\mathbb{M}^{2 \times 2}_+$. Then $A \in \mathbb{M}^{2 \times 2}_+$. Let $\psi(x) = Ax+ \phi(x)$, and write $\psi = (\psi_1, \psi_2)$. Then $\partial_2 \psi_1 $ is bounded below by a positive constant. This implies that $T_1$ is injective on $\Omega$, where
\[ T_1(x) = (x_1,\psi_1(x)), \quad T_2(x) = (x_2, \psi_2(x)). \]
The set $T_1(\Omega)$ is bounded since $\psi$ is Lipschitz, and $T_1$ is a diffeomorphism from $\Omega$ onto the bounded domain $T_1(\Omega)$ by the Inverse Function Theorem. Hence a Lipschitz map $g: T_1(\Omega) \to \mathbb{R}^2$ can be defined by $g \circ T_1 = T_2$. Let
\[ g_0(x) = g(x) -\Psi(A)x, \]
so that $g_0 \in C_0^{\infty}(T_1(\Omega), \mathbb{R}^2)$, and $(\nabla g)(T_1(x) ) = \Psi( \nabla \psi(x) )$. Then
\begin{align*} \int_{\Omega} \widetilde{h}(A + \nabla \phi(x) ) \: dx &= \int_{T_1(\Omega)}  h( \Psi(A) + \nabla g_0(x) ) \: dx   \\
&\geq m(T_1(\Omega)) h(\Psi(A))   \\
&= m(\Omega) \widetilde{h}(A), \end{align*}
where the last equality used the fact that $m(T_1(\Omega)) = A_{12} m(\Omega)$, which follows from an integration by parts or that the determinant is a null Lagrangian. This inequality shows that $\widetilde{h}$ is quasiconvex on $\mathbb{M}^{2 \times 2}_+$. 

By Theorem 1.6 in \cite{kirchheim}, a compactly supported probability measure $\mu$ on $\mathbb{M}^{2 \times 2}$ is a homogeneous gradient Young measure if and only if it satisfies Jensen's inequality for all quasiconvex $f: U \to \mathbb{R}$, where $U$ is any open neighbourhood of $(\supp \mu)^{co}$. Hence if $\mu \in \mathscr{M}_{qc}\left(\mathbb{M}^{2 \times 2}_+\right)$ and $f: \mathbb{M}^{2 \times 2}_+ \to \mathbb{R}$ is quasiconvex on $\mathbb{M}^{2 \times 2}_+$, then by Proposition \ref{properties},
\[ \int f \: d\widetilde{\mu} = \frac{1}{\overline{\mu}_{12}} \int \widetilde{f} \: d\mu \geq \frac{1}{\overline{\mu}_{12}} \widetilde{f}(\overline{\mu}) = f \left(\overline{ \widetilde{\mu} } \right), \]
and therefore $\widetilde{\mu} \in \mathscr{M}_{qc}\left(\mathbb{M}^{2 \times 2}_+\right)$. This covers the case $\Box=qc$. 

The case with $\Box = rc$ can also be done by duality; it suffices to show that $\widetilde{f}$ is rank-one convex whenever $f$ is. This follows from the $\Box=qc$ case since a function is rank-one convex if and only if it satisfies Jensen's inequality for all homogeneous gradient Young measures supported on two points, and this class of measures is mapped onto itself by $\mu \mapsto \widetilde{\mu}$. \end{proof}
\begin{sloppypar}
\begin{corollary} Every homogeneous gradient Young measure supported on \mbox{$\mathbb{M}^{2 \times 2}_* \cap \mathbb{M}^{2 \times 2}_+$}
is a laminate. 
\end{corollary}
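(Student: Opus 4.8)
The plan is to dualize. Recall that $\Psi$ carries $\mathbb{M}^{2 \times 2}_* \cap \mathbb{M}^{2 \times 2}_+$ onto $\mathbb{M}^{2 \times 2}_{\tri} \cap \mathbb{M}^{2 \times 2}_+$, and that by Theorem \ref{dualmeasure} the map $\mu \mapsto \widetilde{\mu}$ preserves each of the classes $\mathscr{M}_{qc}$ and $\mathscr{M}_{rc}$ on $\mathbb{M}^{2 \times 2}_+$. So, given a homogeneous gradient Young measure $\mu$ supported on $\mathbb{M}^{2 \times 2}_* \cap \mathbb{M}^{2 \times 2}_+$, I would pass to $\widetilde{\mu}$, which will be a homogeneous gradient Young measure supported in the upper-triangular matrices; then Theorem \ref{uppertri} shows $\widetilde{\mu}$ is a laminate, and dualizing back shows $\mu = \widetilde{\widetilde{\mu}}$ is a laminate.

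In detail: let $\mu$ be such a measure. Its support is a compact subset of $\mathbb{M}^{2 \times 2}_+$, so $\mu \in \mathscr{M}_{qc}\!\left(\mathbb{M}^{2 \times 2}_+\right)$, and by Theorem \ref{dualmeasure} (case $\Box = qc$) we get $\widetilde{\mu} \in \mathscr{M}_{qc}\!\left(\mathbb{M}^{2 \times 2}_+\right)$; that is, $\widetilde{\mu}$ is again a homogeneous gradient Young measure. By Proposition \ref{properties}(iii), $\supp \widetilde{\mu} = \Psi(\supp \mu) \subseteq \Psi\!\left(\mathbb{M}^{2 \times 2}_* \cap \mathbb{M}^{2 \times 2}_+\right) = \mathbb{M}^{2 \times 2}_{\tri} \cap \mathbb{M}^{2 \times 2}_+ \subseteq \mathbb{M}^{2 \times 2}_{\tri}$. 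Thus $\widetilde{\mu}$ is a homogeneous gradient Young measure supported in $\mathbb{M}^{2 \times n}_{\tri}$ with $n = 2$, so Theorem \ref{uppertri} shows that $\widetilde{\mu}$ is a laminate; since $\supp \widetilde{\mu} \subseteq \mathbb{M}^{2 \times 2}_+$ this says $\widetilde{\mu} \in \mathscr{M}_{rc}\!\left(\mathbb{M}^{2 \times 2}_+\right)$. Applying Theorem \ref{dualmeasure} once more, now with $\Box = rc$, gives $\widetilde{\widetilde{\mu}} \in \mathscr{M}_{rc}\!\left(\mathbb{M}^{2 \times 2}_+\right)$, and Proposition \ref{properties}(iii) identifies $\widetilde{\widetilde{\mu}}$ with $\mu$. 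Hence $\mu$ is a laminate.

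I do not expect any serious obstacle here: once Theorems \ref{uppertri} and \ref{dualmeasure} and Proposition \ref{properties} are available, the corollary is a short chain of implications, the substantive work having already gone into showing that $\mu \mapsto \widetilde{\mu}$ respects the convexity classes. The only point that deserves a moment's attention is the support bookkeeping — namely that $\Psi$, being continuous on $\mathbb{M}^{2 \times 2}_+$, sends the compact set $\supp \mu$ to a compact subset of $\mathbb{M}^{2 \times 2}_{\tri} \cap \mathbb{M}^{2 \times 2}_+$, and that $\mathbb{M}^{2 \times 2}_{\tri} \cap \mathbb{M}^{2 \times 2}_+$ lies inside $\mathbb{M}^{2 \times 2}_{\tri}$ so that Theorem \ref{uppertri} applies verbatim — but this is immediate.
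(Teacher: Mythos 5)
Your proposal is correct and follows exactly the paper's own argument: dualize via $\mu \mapsto \widetilde{\mu}$, use Theorem \ref{dualmeasure} to keep the quasiconvexity class, note $\supp\widetilde{\mu} = \Psi(\supp\mu) \subseteq \mathbb{M}^{2\times 2}_{\tri}$, apply Theorem \ref{uppertri}, and dualize back using $\mu = \widetilde{\widetilde{\mu}}$. The only difference is that you spell out the support bookkeeping slightly more explicitly than the paper does.
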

\begin{proof} This follows from Theorem \ref{uppertri} and Theorem \ref{dualmeasure}, since if $\mu$ is a homogeneous gradient Young measure supported in $\mathbb{M}^{2 \times 2}_* \cap \mathbb{M}^{2 \times 2}_+$, then since $\Psi$ maps \mbox{$\mathbb{M}^{2 \times 2}_* \cap \mathbb{M}^{2 \times 2}_+$} onto $\mathbb{M}^{2 \times 2}_{\tri} \cap \mathbb{M}^{2 \times 2}_+$, the measure $\widetilde{\mu}$ is a homogeneous gradient Young measure supported in $\mathbb{M}^{2 \times 2}_{\tri} \cap \mathbb{M}^{2 \times 2}_+$. Therefore $\widetilde{\mu}$ is a laminate, and this implies that $\mu = \widetilde{ \widetilde{ \mu} }$ is a laminate. \end{proof}
\end{sloppypar}

\begin{appendix}
\section{Appendix: The diagonal case} \label{appendix}
\begin{sloppypar}
This section contains one particular generalisation of Müller's result from the $2 \times 2$ diagonal matrices $\mathbb{M}^{2 \times 2}_{\diag}$ to the subspace
\[ \mathbb{M}^{2 \times n}_{\diag} := \left\{ \begin{pmatrix} a_{1,1} & \cdots & a_{1,n-1} & 0 \\ 0 & \cdots & 0 & a_{2,n} \end{pmatrix} \in \mathbb{M}^{2 \times n} \right\}, \quad n \geq 2. \]
This is used to prove the result for $\mathbb{M}^{2 \times n}_{\tri}$. The proof has only minor modifications from the one in \cite{muller}, but is included for convenience. As in \cite{lee}, define the elements of the Haar system in $L^2(\mathbb{R}^n)$ by 
\[ h_Q^{(\epsilon)}(x) = \prod_{j=1}^n h_{I_j}^{\epsilon_j}(x_j), \quad \text{ for } x \in \mathbb{R}^n, \]
where $\epsilon \in \{0,1\}^n \setminus \{(0, \ldots, 0)\}$, $Q= I_1 \times \cdots \times I_n$ is a dyadic cube in $\mathbb{R}^n$, the $I_j$'s are dyadic intervals of equal size and the convention $0^0=0$ is assumed. A dyadic interval is always of the form $\left[k \cdot 2^{-j}, (k+1)\cdot 2^{-j} \right)$ with $j,k \in \mathbb{Z}$. For a dyadic interval $I=[a,b)$, $h_I$ is defined by 
\[ h_I(x) = h_{[0,1)}\left( \frac{ x-a}{b-a} \right) \quad \text{ for } x \in \mathbb{R}, \]
where 
\[ h_{[0,1)} = \chi_{\left[0, \frac{1}{2}\right)} - \chi_{\left[\frac{1}{2}, 1\right)}. \]
For $j \in \mathbb{Z}$ and $k \in \mathbb{Z}^n$, the notation $h^{(\epsilon)}_{j,k}=h^{(\epsilon)}_Q$ will be used, where
\[ Q =Q_{j,k} =  \bigg[ \frac{ k_1}{2^j},\frac{ k_1+ 1}{2^j} \bigg) \times \cdots \times \bigg[ \frac{ k_n}{2^j}, \frac{ k_n+ 1}{2^j} \bigg). \]
The standard basis vectors in $\mathbb{R}^n$ or $\{0,1\}^n$ will be denoted by $e_j$. The Riesz transform $R_j$ on $L^2(\mathbb{R}^n)$ is defined through multiplication on the Fourier side by $-i \xi_j/|\xi|$. In \cite[Theorem 2.1]{lee} and \cite[Theorem 5]{muller} it was shown that if $\epsilon \in \{0,1\}^n$ satisfies $\epsilon_j =1$, then there is a constant $C$ such that
\begin{equation} \label{interpolatory} \left\|P^{(\epsilon)}u \right\|_2 \leq C \|u\|_2^{1/2} \|R_j u \|_2^{1/2} \quad \text{ for all } u \in L^2(\mathbb{R}^n), \end{equation}
where $\epsilon$ is fixed and $P^{(\epsilon)}$ is the projection onto the closed span of the set
\[ \left\{ h_Q^{(\epsilon)} : Q \subseteq \mathbb{R}^n \text{ is a dyadic cube } \right\}. \]
\begin{lemma} \label{jensen} If $f: \mathbb{M}^{2 \times n} \to \mathbb{R}$ is rank-one convex with $f(0)=0$, and if \mbox{$u_1, \ldots, u_{n-1}, v_n$} have finite expansions in the Haar system
\[u_i = \sum_{ \epsilon_n =0} \sum_{j=J}^K \sum_{k \in \mathbb{Z}^n} a_{j,k, i}^{(\epsilon)} h_{j,k}^{(\epsilon)} \quad \text{ for } 1 \leq i \leq n-1, \text{ and } \quad  \quad v_n = \sum_{j=J}^K \sum_{k \in \mathbb{Z}^n} b_{j,k} h_{j,k}^{(e_n)}, \]
so that $a_{j,k, i}^{(\epsilon)}=b_{j,k}=0$ whenever $|k|$ is sufficiently large, then
\[ \int_{\mathbb{R}^n} f\begin{pmatrix} u_1 & \cdots & u_{n-1} & 0 \\
0 & \cdots & 0 & v_n \end{pmatrix} \: dx  \geq 0. \]
\end{lemma}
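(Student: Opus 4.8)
I would prove the inequality by induction on the number of generations present in the Haar expansions, peeling off the finest level $K$ at each step. Write $F$ for the matrix field $\begin{pmatrix} u_1 & \cdots & u_{n-1} & 0 \\ 0 & \cdots & 0 & v_n\end{pmatrix}$; since all Haar coefficients vanish for $|k|$ large, $F$ is supported on a finite union of dyadic cubes of side $2^{-K}$, and $F\equiv 0$ (hence $f(F)=f(0)=0$) elsewhere, so the integral in the statement is finite and equals the integral over that union. It then suffices to prove, for each dyadic cube $Q=I_1\times\cdots\times I_n$ of side $2^{-K}$, the estimate $\int_Q f(F)\,dx\ge |Q|\,f(M_Q)$, where $M_Q$ is the average of $F$ over $Q$, and then to check that the field taking the constant value $M_Q$ on each such $Q$ is again of the form treated by the lemma, with one fewer generation --- closing the induction (the base case, no generations, being $F\equiv 0$ and $f(0)=0$).

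To set up the estimate on a fixed cube $Q$: every Haar function of level $j\le K-1$ is constant on $Q$ (its finest variation sits at the dyadic scale $2^{-j-1}\ge 2^{-K}$), so the contribution of levels $J,\dots,K-1$ to $u_i$, resp. $v_n$, is the constant average $c_i$, resp. $d$, over $Q$, whence $M_Q=\begin{pmatrix} c_1 & \cdots & c_{n-1} & 0 \\ 0 & \cdots & 0 & d\end{pmatrix}$. The level-$K$ part of $u_i$ on $Q$ is a combination $p_i(x')$ of the functions $h^{(\epsilon)}_{Q}$ with $\epsilon_n=0$; on $Q$ each of these depends on $x':=(x_1,\dots,x_{n-1})$ only (its $x_n$-slot is $\chi_{I_n}(x_n)\equiv 1$ there) and has mean zero over $Q':=I_1\times\cdots\times I_{n-1}$. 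The level-$K$ part of $v_n$ on $Q$ equals $b\,h_{I_n}(x_n)$ for a scalar $b$, a function of $x_n$ alone that is $\pm b$ on the two halves of $I_n$. So on $Q$,
\[ F(x)=\begin{pmatrix} c_1+p_1(x') & \cdots & c_{n-1}+p_{n-1}(x') & 0 \\ 0 & \cdots & 0 & d+b\,h_{I_n}(x_n)\end{pmatrix}. \]

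The core of the argument is then a two-step averaging over $Q$. First, fixing $x'$ and integrating in $x_n$: the two matrices obtained by putting $d\pm b$ in the $(2,n)$-entry differ by the rank-one matrix $2b\,e_2\otimes e_n$ and average to the matrix with $(2,n)$-entry $d$, so rank-one convexity of $f$ gives $\frac{1}{|I_n|}\int_{I_n} f(F(x',x_n))\,dx_n\ge g(c'+p(x'))$, where $c'=(c_1,\dots,c_{n-1})$, $p=(p_1,\dots,p_{n-1})$, and $g:\mathbb{R}^{n-1}\to\mathbb{R}$ is defined by $g(y')=f\!\begin{pmatrix} y_1 & \cdots & y_{n-1} & 0 \\ 0 & \cdots & 0 & d\end{pmatrix}$. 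Any two matrices of that form differ by $e_1\otimes(z'-y',0)$, of rank at most one, so $f$ is convex along every segment joining two of them; hence $g$ is convex on $\mathbb{R}^{n-1}$, and since each $p_i$ has mean zero over $Q'$, Jensen's inequality gives $\frac{1}{|Q'|}\int_{Q'} g(c'+p(x'))\,dx'\ge g(c')=f(M_Q)$. Multiplying these two inequalities out yields $\int_Q f(F)\,dx\ge |Q|\,f(M_Q)$, and summing over $Q$ gives $\int_{\mathbb{R}^n} f(F)\,dx\ge\int_{\mathbb{R}^n} f(\widetilde F)\,dx$, where $\widetilde F$ is the field built from the truncated expansions $\widetilde u_i=\sum_{\epsilon_n=0}\sum_{j=J}^{K-1}\sum_k a^{(\epsilon)}_{j,k,i}h^{(\epsilon)}_{j,k}$ and $\widetilde v_n=\sum_{j=J}^{K-1}\sum_k b_{j,k}h^{(e_n)}_{j,k}$ --- of the right form with one fewer generation.

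I expect the only delicate point to be exactly this two-step averaging on the finest cubes: seeing that the perturbation of the last row lies in the honest rank-one direction $e_2\otimes e_n$ (so one rank-one split in $x_n$ suffices) and that, with the last row frozen, the first-row perturbations $p_i$ also lie in rank-one directions (so that $g$ is genuinely convex and plain Jensen applies in $x'$), together with the verification that the peeled field $\widetilde F$ again satisfies the lemma's hypotheses. The remaining details are routine bookkeeping with the Haar indexing.
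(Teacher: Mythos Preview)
Your proposal is correct and follows essentially the same approach as the paper: induction on the number of generations, peeling off the finest level $K$ by a two-step Jensen/rank-one-convexity averaging on each dyadic cube $Q_{K,k}$, then summing over $k$. The only cosmetic difference is that you average in $x_n$ first and then in $x'=(x_1,\dots,x_{n-1})$, whereas the paper does it in the opposite order; both orderings work for the same reason (the level-$K$ top-row perturbation depends only on $x'$ and the bottom-row perturbation only on $x_n$).
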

\begin{proof} The assumption that  $a_{j,k, i}^{(\epsilon)}=b_{j,k}=0$ for $|k|$ sufficiently large means the integral converges absolutely. Let
\[\widetilde{u}_i = \sum_{ \epsilon_n = 0} \sum_{j=J}^{K-1} \sum_{k \in \mathbb{Z}^n} a_{j,k, i}^{(\epsilon)} h_{j,k}^{(\epsilon)} \quad \text{ for } 1 \leq i \leq n-1, \text{ and let } \quad \widetilde{v}_n = \sum_{j=J}^{K-1} \sum_{k \in \mathbb{Z}^n} b_{j,k} h_{j,k}^{(e_n)}. \]
Then on $Q_{K,k}$, for any $k \in \mathbb{Z}^n$, 
\[ u_i' := u_i - \widetilde{u}_i =\sum_{ \epsilon_n = 0}  a_{K,k, i}^{(\epsilon)} h_{K,k}^{(\epsilon)}, \quad  v_n' := v_n -\widetilde{v}_n = b_{K,k}h_{K,k}^{(e_n)},  \] 
and
\begin{align*} &\int_{Q_{K,k}} f\begin{pmatrix} u_1 & \cdots & u_{n-1} & 0 \\
0 & \cdots & 0 & v_n \end{pmatrix} \: dx \\
= &\int_{Q_{K,k}} f\begin{pmatrix} \widetilde{u}_1+ u_1'  & \cdots & \widetilde{u}_{n-1}+  u_{n-1}' & 0 \\
0 & \cdots & 0 & \widetilde{v}_n + v_n' \end{pmatrix} \: dx_1 \: \cdots \: dx_n. \end{align*}
The bottom row is constant in $x_1, \ldots, x_{n-1}$ on $Q_{K,k}$, and so the function is convex for the integration with respect to $x_1, \ldots, x_{n-1}$. The terms $\widetilde{u}_i$ and $\widetilde{v}_n$ are constant on $Q_{K,k}$, and the $x_1, \ldots, x_{n-1}$ integral of $u_i'$ over the $(n-1)$-dimensional dyadic cube inside $Q_{K,k}$ is zero (for any $x_n$). Hence applying Jensen's inequality for convex functions gives
\begin{align*} &\int_{Q_{K,k}} f\begin{pmatrix} u_1 & \cdots & u_{n-1} & 0 \\
0 & \cdots & 0 & v_n \end{pmatrix} \: dx \\
\geq &\int_{Q_{K,k}} f\begin{pmatrix} \widetilde{u}_1 & \cdots & \widetilde{u}_{n-1} & 0 \\
0 & \cdots & 0 & \widetilde{v}_n + v_n' \end{pmatrix} \: dx_1 \: \cdots \: dx_n. \end{align*}
Applying Jensen's inequality similarly to the integration in $x_n$, and summing over all $k \in \mathbb{Z}^n$ gives
\[ \int_{\mathbb{R}^n} f\begin{pmatrix} u_1 & \cdots & u_{n-1} & 0 \\
0 & \cdots & 0 & v_n \end{pmatrix} \: dx  \geq \int_{\mathbb{R}^n} f\begin{pmatrix} \widetilde{u}_1 & \cdots & \widetilde{u}_{n-1} & 0 \\
0 & \cdots & 0 & \widetilde{v}_n \end{pmatrix} \: dx.\]
By induction this proves the lemma. \end{proof}
\begin{sloppypar}

\begin{proof}[Proof of Theorem \ref{muller}] Let $\mu$ be a homogeneous gradient Young measure supported in $\mathbb{M}^{2 \times n}_{\diag}$, and let $f: \mathbb{M}^{2 \times n} \to \mathbb{R}$ be a rank-one convex function. It is required to show that
\[ \int f \: d\mu \geq f( \overline{\mu}). \]
Without loss of generality it may be assumed that $\overline{\mu}=0$ and that $f(0)=0$.  After replacing $f$ by an extension of $f$ which is equal to $f$ on $(\supp \mu)^{co}$, it can also be assumed that there is a constant $C$ with
\begin{equation} \label{quadratic} |f(X)| \leq C(1+|X|^2) \quad \text{ for all } X \in \mathbb{M}^{2 \times n}. \end{equation}

Let $\Omega \subseteq \mathbb{R}^n$ be the open unit cube. By the characterisation of gradient Young measures \cite[Theorem 8.16]{pedregal} there is a sequence $\phi^{(j)}=(\phi^{(j)}_1,\phi^{(j)}_2)$ in $W^{1, \infty}(\Omega, \mathbb{R}^2)$ whose gradients generate $\mu$, which means that
\begin{equation} \label{young} \lim_{j \to \infty} \int_{\Omega} \eta(x) g\left( \nabla \phi^{(j)}(x) \right) \: dx = \int g \: d\mu \cdot \int_{\Omega} \eta(x) \: dx, \end{equation}
for any continuous $g$ and for all $\eta \in L^1(\Omega)$. In particular $\nabla \phi^{(j)} \to 0$ weakly in $L^2(\Omega, \mathbb{M}^{2 \times n})$. By the sharp version of the Zhang truncation theorem (see \cite[Corollary 3]{muller2}) it may be assumed that 
\begin{equation} \label{zhang} \left\|\dist\left(\nabla \phi^{(j)}, \mathbb{M}^{2 \times n}_{\diag} \right) \right\|_{\infty} \to 0. \end{equation} 
As in Lemma 8.3 of \cite{pedregal}, after multiplying the sequence by cutoff functions and diagonalising in such a way as to not affect \eqref{zhang}, it can additionally be assumed that $\phi^{(j)} \in W_0^{1, \infty}(\Omega, \mathbb{R}^2)$ (the choice $p=\infty$ is not that important, any large enough $p$ would work). 

Let $P_1: L^2(\mathbb{R}^n) \to L^2(\mathbb{R}^n)$ be the projection onto the closed span of
\[ \{ h_Q^{(\epsilon)} : Q \subseteq \mathbb{R}^n \text{ is a dyadic cube and } \epsilon_n = 0 \}, \]
and let \mbox{$P_2: L^2(\mathbb{R}^n) \to L^2(\mathbb{R}^n)$} be the projection onto the closed span of
\[ \{ h_Q^{(\epsilon)} : Q \subseteq \mathbb{R}^n \text{ is a dyadic cube and } \epsilon = e_n \}. \] 
Write $w^{(j)} = \nabla \phi^{(j)}$, so that by \eqref{zhang} and the fact that $R_i \partial_j \phi = R_j \partial_i \phi$, 
\[ \left\|R_n w^{(j)}_{1,1} \right\|_2, \ldots, \left\|R_nw^{(j)}_{1,n-1}\right\|_2 \to 0, \quad \left\|R_1w^{(j)}_{2,n}\right\|_2, \ldots, \left\|R_{n-1}w^{(j)}_{2,n}\right\|_2 \to 0. \]
Hence by \eqref{interpolatory} and orthogonality
\begin{equation} \label{projections} \left\|  w^{(j)}_{1,1}-P_1 w^{(j)}_{1,1} \right\|_2, \ldots ,\left\|  w^{(j)}_{1,n-1}-P_1 w^{(j)}_{1,n-1}\right\|_2 \to 0, \quad \left\| w^{(j)}_{2,n}-P_2 w^{(j)}_{2,n} \right\|_2 \to 0. \end{equation}

The function $f$ is separately convex since it is rank-one convex. Hence by the quadratic growth of $f$ in \eqref{quadratic} (see Observation 2.3 in \cite{matousek98}), there exists a constant $K$ such that
\begin{equation} \label{lipschitz} |f(X)-f(Y)| \leq K(1+|X|+|Y|)|X-Y| \text{ for all } X,Y \in \mathbb{M}^{2 \times n}. \end{equation}
Hence applying $\eqref{young}$ with $\eta= \chi_{\Omega}$ gives
\begin{align} \notag \int f \: d\mu  &= \lim_{j \to \infty} \int_{\Omega} f\left( w^{(j)} \right)\: dx \\
\label{liminf} &= \lim_{j \to \infty} \int_{\Omega} f\begin{pmatrix} P_1w^{(j)}_{11} & \cdots & P_1w^{(j)}_{1,n-1} & 0 \\
0 & \cdots & 0 & P_2w^{(j)}_{2,n} \end{pmatrix} \: dx, \end{align}
by \eqref{zhang}, \eqref{projections}, \eqref{lipschitz} and the Cauchy-Schwarz inequality. The functions $w^{(j)}$ are supported in $\overline{\Omega}$ and satisfy $\int_{\Omega} w^{(j)} \: dx = 0$ by the definition of weak derivative. Hence the $L^2(\mathbb{R}^n)$ inner product satisfies $\left\langle w^{(j)}, h_Q^{(\epsilon)} \right\rangle = 0$ whenever $Q$ is a dyadic cube not contained in $\overline{\Omega}$. This implies that $P_1w^{(j)}$ and $P_2w^{(j)}$ are supported in $\overline{\Omega}$. The integrand in \eqref{liminf} therefore vanishes outside $\overline{\Omega}$, and so
\[  \int f\: d\mu = \lim_{j \to \infty} \int_{\mathbb{R}^n} f\begin{pmatrix} P_1w^{(j)}_{11} & \cdots & P_1w^{(j)}_{1,n-1} & 0 \\
0 & \cdots & 0 & P_2w^{(j)}_{2,n} \end{pmatrix} \: dx \geq 0 \]
by \eqref{lipschitz} and Lemma \ref{jensen}. This finishes the proof. \end{proof} \end{sloppypar}
\end{sloppypar} \end{appendix}



%
%

\end{document}